\documentclass{amsart}

\theoremstyle{theorem}
\newtheorem{theorem}{Theorem}

\theoremstyle{definition}
\newtheorem*{definition}{Definition}
\newtheorem*{remark}{Remark}
\newtheorem{lemma}{Lemma}

\DeclareMathOperator{\tr}{tr}
\DeclareMathOperator{\vc}{vec}

\begin{document}

\title{A Short Note on Kronecker Square Roots}
\author{Yorick Hardy}
\address{
Department of Mathematical Sciences,
University of South Africa,
Johannesburg, South Africa
}
\email{hardyy@unisa.ac.za}

\maketitle

\begin{abstract}
 The results of [I. Ojeda, Amer. Math. Monthly, 122, pp 60--64]
 provides a characterization of Kronecker square roots of matrices
 in terms of the symmetry and rank of the block vec matrix (rearrangement matrix).
 In this short note we reformulate the characterization in terms of rank only
 by considering an alternative to the block vec matrix,
 provided that the characteristic of the underlying field is not equal to 2.
\end{abstract}

\bigskip

Let $\otimes$ denote the Kronecker product and $\bigotimes^k A$ the $k$-th
Kronecker power of a matrix $A$. An $m\times n$ matrix $A$ is said to be
a $k$-th Kronecker root of an $m^k\times n^k$ matrix $M$ if $M=\bigotimes^r A$.

Ojeda introduced the notion of the block vec matrix \cite{ojeda15a} to
characterize Kronecker square roots and to describe a simple procedure to
compute an Kronecker square root for real and complex matrices. In particular,
the rearrangement matrix in \cite{vanloan93a} plays a central role.
Let $A$ be an $m\times n$ matrix and $B$ be an $s\times t$ matrix.
The rearrangement operator $R_{m\times n}$ \cite{vanloan93a} is defined by
\begin{equation*}
 R_{m\times n}(A\otimes B) = \vc(A)\vc(B)^T,
\end{equation*}
and linear extension.
A characterization of Kronecker square roots is given in \cite{ojeda15a}:

\begin{theorem}[{\cite[Corollary 1]{ojeda15a}}]
 \label{thm:o1}%
 If $M$ is a non-zero $m^2\times n^2$ matrix and $A$ is an $m\times n$ matrix then
 \begin{enumerate}
  \item $M=A\otimes A$ if and only if $R_{m\times n}(M)=\vc(A)\vc(A)^T$,
  \item if $M=A\otimes A$, then $R_{m\times n}(M)$ is symmetric and has rank one.
 \end{enumerate}
\end{theorem}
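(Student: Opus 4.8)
The plan is to reduce everything to one structural fact: the rearrangement operator $R_{m\times n}$ is a linear bijection from the space of $m^2\times n^2$ matrices onto the space of $mn\times mn$ matrices. First I would verify this on a basis. Writing $E_{ij}$ for the elementary $m\times n$ matrices, the products $E_{ij}\otimes E_{kl}$ form a basis of the $m^2\times n^2$ matrices, and the defining relation gives $R_{m\times n}(E_{ij}\otimes E_{kl})=\vc(E_{ij})\vc(E_{kl})^T$, which is an elementary matrix in the $mn\times mn$ space. Since distinct basis elements map to distinct elementary matrices, and both spaces have dimension $m^2n^2$, the operator $R_{m\times n}$ carries a basis to a basis and is therefore bijective. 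This is exactly the reshaping (entry-permutation) property underlying van Loan's rearrangement, so I expect it to be routine; it also confirms that the defining relation together with linearity determines $R_{m\times n}$ on the whole space, since the Kronecker products span it.

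With bijectivity in hand, part (1) splits cleanly. The forward implication is immediate: if $M=A\otimes A$, then setting $B=A$ in the defining relation yields $R_{m\times n}(M)=\vc(A)\vc(A)^T$ at once. For the converse, suppose $R_{m\times n}(M)=\vc(A)\vc(A)^T$. The same defining relation gives $R_{m\times n}(A\otimes A)=\vc(A)\vc(A)^T$, so $M$ and $A\otimes A$ have equal image under $R_{m\times n}$, and injectivity forces $M=A\otimes A$.

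For part (2), assume $M=A\otimes A$, so that $R_{m\times n}(M)=\vc(A)\vc(A)^T$ by part (1). Symmetry is immediate from $(\vc(A)\vc(A)^T)^T=\vc(A)\vc(A)^T$. For the rank, note that $M\neq 0$ forces $A\neq 0$ and hence $\vc(A)\neq 0$; an outer product $vv^T$ of a nonzero vector with itself has rank exactly one, which gives the claim. The only genuine content lies in the bijectivity established in the first step, and even that is standard, so I do not anticipate a substantive obstacle beyond careful bookkeeping with the definition.
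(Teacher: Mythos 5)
Your proof is correct. The paper does not prove this theorem itself --- it is quoted from Ojeda's article --- but your argument (reduce everything to the fact that $R_{m\times n}$ permutes entries and hence is a linear bijection, then read off both parts from $R_{m\times n}(A\otimes A)=\vc(A)\vc(A)^T$ with $\vc(A)\neq 0$) is the standard one and matches the way the present paper itself invokes bijectivity of the rearrangement operators $R_{m\times n}^{(j)}$ without further comment.
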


This short note will show that (2) may be reformulated without reference
to symmetry (Theorem \ref{thm:nec}), provided that the underlying field is not of
characteristic 2. The reformulation of (1) and Theorem \ref{thm:o2} follows
trivially, as described at the end of this note.

\begin{theorem}[{\cite[Theorem 2]{ojeda15a}}]
 \label{thm:o2}%
 If $M$ is an $m^2\times n^2$ real or complex matrix
 such that $R_{m\times n}(M)$ is symmetric and has rank one, then
 \begin{enumerate}
  \item there exists an $m\times n$ complex matrix $A$ such that $M=A\otimes A$,
  \item if $A$ and $B$ are $m\times n$ matrices satisfying $M=A\otimes A=B\otimes B$,
        then $B=\pm A$,
  \item if $M$ is real then there exists an $m\times n$ real matrix $A$ such that $M=A\otimes A$ if and only
        if $\tr(R_{m\times n}(M))>0$.
 \end{enumerate}
\end{theorem}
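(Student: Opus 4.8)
The plan is to reduce all three parts to one clean linear-algebra fact---that a symmetric rank-one matrix factors as $vv^T$---and then to invoke Theorem~\ref{thm:o1}(1) to translate such a factorization of $R_{m\times n}(M)$ into a Kronecker square root of $M$. Write $R := R_{m\times n}(M)$, an $mn \times mn$ symmetric matrix of rank one. First I would establish the key factorization: if $R = uw^T$ is symmetric of rank one, then $uw^T = wu^T$ read entrywise gives $u_i w_j = w_i u_j$ for all $i,j$, which forces $w = \alpha u$ for some nonzero scalar $\alpha$, whence $R = \alpha\,uu^T$. Over $\mathbb{C}$ one may extract a square root $\sqrt{\alpha}$ and absorb it, obtaining $R = vv^T$ with $v = \sqrt{\alpha}\,u \in \mathbb{C}^{mn}$.

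For part (1), setting $A := \vc^{-1}(v)$ (the unique $m \times n$ matrix with $\vc(A) = v$) yields $R = \vc(A)\vc(A)^T$, so Theorem~\ref{thm:o1}(1) gives $M = A \otimes A$ and the complex square root exists. For part (2), if $M = A \otimes A = B \otimes B$, then Theorem~\ref{thm:o1}(1) forces $\vc(A)\vc(A)^T = R = \vc(B)\vc(B)^T$; equality of the column spaces of these rank-one matrices gives $\vc(B) = \mu\,\vc(A)$ for a scalar $\mu$, and substituting back forces $\mu^2 = 1$, hence $B = \pm A$.

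For part (3) I would use that, when $M$ is real, $R$ is real symmetric of rank one, so the spectral theorem gives $R = \lambda\,uu^T$ with $\lambda \in \mathbb{R}\setminus\{0\}$ and $u \in \mathbb{R}^{mn}$ a unit vector; consequently $\tr(R) = \lambda$. A real $A$ with $M = A \otimes A$ forces $R = \vc(A)\vc(A)^T$ and hence $\tr(R) = \|\vc(A)\|^2 > 0$; conversely, if $\lambda = \tr(R) > 0$ then $v := \sqrt{\lambda}\,u$ is real, $R = vv^T$, and $A := \vc^{-1}(v)$ is a real square root. The main obstacle is exactly this last part: everything hinges on identifying $\tr(R)$ with the unique nonzero eigenvalue $\lambda$, and on the elementary but decisive observation that $\sqrt{\lambda}$ is real precisely when $\lambda > 0$, which is the sole feature separating the real case from the complex one.
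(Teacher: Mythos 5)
Your proof is correct. Note that the paper itself states Theorem \ref{thm:o2} without proof (it is quoted from \cite{ojeda15a}), and your argument --- factoring the symmetric rank-one matrix $R_{m\times n}(M)$ as $vv^T$ (over the complex numbers in general, and over the reals exactly when $\tr(R_{m\times n}(M))>0$) and then transferring this to $M=A\otimes A$ via Theorem \ref{thm:o1}(1) --- is essentially the standard one given in that reference.
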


In the proof of part (1) and part (3), Ojeda provides a straightforward procedure to determine
Kronecker square roots of matrices over the real or complex numbers. An analogous method holds
using the reformulation of Theorem \ref{thm:o1}.
First, we define the ``rearrangement'' operator $R_{m\times n}^{\Sigma}$, which plays a similar
role to $R_{m\times n}$.

\begin{definition}
 Let $A_1$, \ldots, $A_k$ be $m\times n$ matrices and $j\in\{1,\ldots,k\}$.
 Define the $j$-th rearrangement operator $R_{m\times n}^{(j)}$ by
 \begin{equation*}
  R_{m\times n}^{(j)}(A_1\otimes\cdots\otimes A_k)
   =\vc(A_j)\vc(A_1\otimes\cdots\otimes A_{j-1}\otimes A_{j+1}\otimes\cdots\otimes A_k)^T
 \end{equation*}
 and linear extension.
\end{definition}

It follows that for $k=2$ we have $R_{m\times n}(M) = R_{m\times n}^{(1)}(M) = (R_{m\times n}^{(2)}(M))^T$.
The $j$-th rearrangement operator $R_{m\times n}^{(j)}$ is bijective. Consequently
$M=\bigotimes^k A$ if and only if
\begin{equation*}
 R_{m\times n}^{(j)}(M)=\vc(A)\vc\left(\textstyle \bigotimes^{k-1} A\right)
\end{equation*}
for all $j\in\{1,\ldots,k\}$. In this case $R_{m\times n}^{(j)}(M)$ has rank one.

\begin{remark}
 The utility of $R_{m\times n}^{(j)}(M)$ is that it rearranges the entries of $M$
 in a configuration that is suitable for matrix rank analysis. Other rearrangements
 or unfoldings are equally applicable for this purpose.
\end{remark}

\begin{definition}
 Let $A_1$, \ldots, $A_k$ be $m\times n$ matrices. We define $R_{m\times n}^{\Sigma}$ by
 \begin{equation*}
  R_{m\times n}^{\Sigma}(A_1\otimes\cdots\otimes A_k)=\sum_{j=1}^k R_{m\times n}^{(j)}(A_1\otimes\cdots\otimes A_k)
 \end{equation*}
 and linear extension.
\end{definition}

Clearly $R_{m\times n}^{\Sigma}$ is not injective.

\begin{lemma}
 \label{lem:inj}%
 Suppose that the characteristic of the underlying field does not divide $k$.
 The restriction $R_{m\times n}^{\Sigma}:X\to R_{m\times n}^{(1)}(X)$ is bijective, where
 \begin{equation*}
  X=\left\{\,\alpha\textstyle\bigotimes^k A\,:\,\text{where $A$ is an $m\times n$ matrix and $\alpha$ a scalar}\,\right\}.
 \end{equation*}
\end{lemma}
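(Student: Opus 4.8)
The plan is to reduce $R_{m\times n}^{\Sigma}$ on $X$ to a scalar multiple of $R_{m\times n}^{(1)}$, and then to read off the three properties of a bijection from the invertibility of that scalar together with the global bijectivity of $R_{m\times n}^{(1)}$ already noted above.

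First I would record the key pointwise identity on $X$. Fix an element $M=\alpha\bigotimes^k A$ of $X$ and apply the definition of $R_{m\times n}^{(j)}$ with $A_1=\cdots=A_k=A$. Then the distinguished factor $A_j$ always equals $A$, and the remaining product $A_1\otimes\cdots\otimes A_{j-1}\otimes A_{j+1}\otimes\cdots\otimes A_k$ always equals $\bigotimes^{k-1} A$, irrespective of which index $j$ is deleted. Hence
\[ R_{m\times n}^{(j)}\left(\textstyle\bigotimes^k A\right)=\vc(A)\,\vc\left(\textstyle\bigotimes^{k-1} A\right)^T=R_{m\times n}^{(1)}\left(\textstyle\bigotimes^k A\right) \]
for every $j\in\{1,\ldots,k\}$. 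Summing over $j$, and using that each $R_{m\times n}^{(j)}$ (hence their sum $R_{m\times n}^{\Sigma}$) is linear to pull the scalar $\alpha$ through, gives $R_{m\times n}^{\Sigma}(M)=k\,R_{m\times n}^{(1)}(M)$ for all $M\in X$.

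Next I would exploit this identity. Since the characteristic of the field does not divide $k$, the scalar $k$ is invertible. Because $X$ is closed under scalar multiplication, multiplication by $k$ is a bijection of $X$, so $R_{m\times n}^{\Sigma}(M)=k\,R_{m\times n}^{(1)}(M)\in k\,R_{m\times n}^{(1)}(X)=R_{m\times n}^{(1)}(X)$, which shows the map lands in the stated codomain. For injectivity, if $R_{m\times n}^{\Sigma}(M)=R_{m\times n}^{\Sigma}(N)$ with $M,N\in X$, then $k\,R_{m\times n}^{(1)}(M)=k\,R_{m\times n}^{(1)}(N)$; cancelling the unit $k$ and invoking the global injectivity of $R_{m\times n}^{(1)}$ yields $M=N$. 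For surjectivity, an arbitrary element of $R_{m\times n}^{(1)}(X)$ is $R_{m\times n}^{(1)}(M)$ for some $M\in X$, and since $k^{-1}M\in X$ we have $R_{m\times n}^{(1)}(M)=k\,R_{m\times n}^{(1)}(k^{-1}M)=R_{m\times n}^{\Sigma}(k^{-1}M)$.

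The only point needing care is that $X$ is merely a cone, not a linear subspace: it is closed under scalar multiplication but not under addition, so I cannot treat $R_{m\times n}^{\Sigma}|_X$ as a restricted linear map and quote injectivity directly. The argument therefore uses the identity $R_{m\times n}^{\Sigma}=k\,R_{m\times n}^{(1)}$ only \emph{pointwise} on $X$, relying on the global linearity and bijectivity of the operators $R_{m\times n}^{(j)}$ rather than on any linear structure of $X$ itself. Once the scalar identity is in hand, the conclusion is immediate from $k$ being a unit, so this is the whole substance of the proof.
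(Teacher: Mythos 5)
Your proof is correct and follows essentially the same route as the paper: both reduce to the pointwise identity $R_{m\times n}^{\Sigma}(M)=k\,R_{m\times n}^{(1)}(M)$ on $X$ and then use the invertibility of $k$ together with the bijectivity of $R_{m\times n}^{(1)}$ for injectivity, and rescaling by $k^{-1}$ within $X$ for surjectivity. Your explicit verification of the identity $R_{m\times n}^{(j)}(\bigotimes^k A)=R_{m\times n}^{(1)}(\bigotimes^k A)$ and the remark that $X$ is only a cone are welcome clarifications, but the substance matches the paper's argument.
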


\begin{proof}
 Let $\alpha\textstyle\bigotimes^k A\in X$ and $\beta\textstyle\bigotimes^k B\in X$, then
 \begin{equation*}
  R_{m\times n}^{\Sigma}\left(\alpha\textstyle\bigotimes^k A\right)
    = R_{m\times n}^{\Sigma}\left(\beta\textstyle\bigotimes^k B\right)
 \end{equation*}
 if and only if
 \begin{equation*}
  kR_{m\times n}^{(1)}\left(\alpha\textstyle\bigotimes^k A\right)
    = kR_{m\times n}^{(1)}\left(\beta\textstyle\bigotimes^k B\right)
 \end{equation*}
 and since $k\neq 0$ and $R_{m\times n}^{(1)}$ is bijective we have
 $\alpha\textstyle\bigotimes^k A=\beta\textstyle\bigotimes^k B$.
 Thus the restricted map $R_{m\times n}^{\Sigma}$ is injective.
 The restricted map $R_{m\times n}^{\Sigma}$ is surjective since
 \begin{equation*}
  R_{m\times n}^{\Sigma}\left(\frac{\alpha}k\textstyle\bigotimes^k A\right)
   = R_{m\times n}^{(1)}\left(\alpha\textstyle\bigotimes^k A\right).
 \end{equation*}
\end{proof}

\begin{remark}
 The $1^3\times 2^3$ matrix $M=\begin{pmatrix}1&-1&1&0&0&0&0&0\end{pmatrix}$
 is not a Kronecker cube, but $R_{1\times 2}^{\Sigma}(M)$ is a rank 1 matrix.
 Of course, $M\notin X$ in this example.
\end{remark}

With the above definitions and lemma, we are ready to extend Theorem \ref{thm:o1} for
higher order Kronecker roots.

\begin{theorem}
 \label{thm:nec}%
 Suppose that the characteristic of the underlying field does not divide $k$.
 If $M$ is a non-zero $m^k\times n^k$ matrix and $A$ is an $m\times n$ matrix
 such that $M=\bigotimes^k A$, then $R_{m\times n}^{\Sigma}(M)$ has rank one.
\end{theorem}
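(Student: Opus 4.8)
The plan is to compute $R_{m\times n}^{\Sigma}(M)$ directly and show that it collapses to a nonzero scalar multiple of a single rank-one matrix. The crucial observation is that when $M=\bigotimes^k A$, every tensor factor equals $A$, so deleting the $j$-th factor from $A\otimes\cdots\otimes A$ leaves $\bigotimes^{k-1} A$ \emph{regardless} of which index $j$ is chosen. Hence, by the defining formula for $R_{m\times n}^{(j)}$, each summand satisfies
\[
 R_{m\times n}^{(j)}(M)=\vc(A)\vc\left(\textstyle\bigotimes^{k-1} A\right)^T,
\]
a single matrix that does not depend on $j$.

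Summing over $j=1,\ldots,k$ then gives
\[
 R_{m\times n}^{\Sigma}(M)=k\,\vc(A)\vc\left(\textstyle\bigotimes^{k-1} A\right)^T.
\]
Since the characteristic of the underlying field does not divide $k$, the scalar $k$ is nonzero in the field, so $R_{m\times n}^{\Sigma}(M)$ is a nonzero multiple of the outer product $\vc(A)\vc(\bigotimes^{k-1} A)^T$.

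It remains only to confirm that this outer product has rank exactly one rather than rank zero. Because $M$ is non-zero, $A$ must be non-zero, so $\vc(A)\neq 0$; and $A\neq 0$ forces $\bigotimes^{k-1} A\neq 0$, so $\vc(\bigotimes^{k-1} A)\neq 0$ as well. The outer product of two nonzero vectors has rank one, and multiplying by the nonzero scalar $k$ preserves this. I expect no genuine obstacle in the argument; the one point that must be handled with care is precisely the hypothesis that the characteristic does not divide $k$, since otherwise the coefficient $k$ would vanish and the rank would drop to zero---exactly the degenerate situation this hypothesis is designed to rule out.
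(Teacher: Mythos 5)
Your proposal is correct and follows essentially the same route as the paper: compute each $R_{m\times n}^{(j)}(M)=\vc(A)\vc(\bigotimes^{k-1}A)^T$, sum to get $k$ times that outer product, and use the characteristic hypothesis to conclude $k\neq 0$. You merely spell out in more detail the final step that the outer product of two nonzero vectors has rank one, which the paper leaves implicit.
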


\begin{proof}
 Since for $M=\bigotimes^k A$ we have that
 \begin{equation*}
  R_{m\times n}^{\Sigma}(M)=k\vc(A)\vc\left(\textstyle \bigotimes^{k-1} A\right),
 \end{equation*}
 and since the characteristic of the underlying field does not divide $k$, $k\neq 0$
 and it follows that $R_{m\times n}^{\Sigma}(M)$ has rank one.
\end{proof}

The connection between Theorem \ref{thm:o1} part (2) and Theorem \ref{thm:nec} is as
follows.

\begin{theorem}
 Let $k=2$ and let $M$ be a non-zero $m^2\times n^2$ matrix over a field with characteristic
 not equal to 2. Then $R_{m\times n}(M)$ is symmetric and has rank one
 if and only if $R_{m\times n}^{\Sigma}(M)$ has rank one.
\end{theorem}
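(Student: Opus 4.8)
The plan is to reduce the statement to elementary linear algebra. For $k=2$ the identity $R_{m\times n}(M)=R_{m\times n}^{(1)}(M)=(R_{m\times n}^{(2)}(M))^T$ recorded above gives $R_{m\times n}^{\Sigma}(M)=R_{m\times n}^{(1)}(M)+R_{m\times n}^{(2)}(M)=R_{m\times n}(M)+R_{m\times n}(M)^T$. Writing $N:=R_{m\times n}(M)$, I would first observe that $R_{m\times n}^{\Sigma}(M)=N+N^T$ is (twice) the symmetric part of $N$ and is automatically symmetric; the real content of the theorem is therefore how the rank of this symmetric part interacts with the symmetry and rank of $N$ itself.

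For the forward implication I would simply note that if $N$ is symmetric of rank one then $N+N^T=2N$, and since the characteristic is not $2$ the scalar $2$ is invertible, so $R_{m\times n}^{\Sigma}(M)$ has the same rank as $N$, namely one. This direction is immediate and uses the hypothesis on the characteristic only through the invertibility of $2$.

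For the converse I would use the symmetric/skew decomposition $N=S+K$ with $S=\frac{1}{2}(N+N^T)$ and $K=\frac{1}{2}(N-N^T)$, available precisely because $2$ is invertible. Then $N+N^T=2S$, so the rank-one hypothesis on $R_{m\times n}^{\Sigma}(M)$ gives $\operatorname{rank}(S)=1$ at once. The decisive step that remains is to force the skew part $K$ to vanish, since $N$ is symmetric of rank one if and only if $K=0$ (and then $N=S$). I expect this to be the main obstacle, and it appears to be a genuine one: the linear map $N\mapsto N+N^T$ annihilates every skew-symmetric matrix, so the rank of $R_{m\times n}^{\Sigma}(M)$ carries no information about $K$ at all. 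Concretely, replacing $N$ by $N+K'$ for any skew $K'$ leaves $R_{m\times n}^{\Sigma}(M)$ unchanged while generally destroying both the symmetry and the rank-one property of $N$. I would therefore expect the converse to require the symmetry of $R_{m\times n}(M)$ as an additional ingredient (equivalently, working only with those $M$ for which $R_{m\times n}(M)$ is symmetric); with that input the identification $N=S$ makes the conclusion immediate, whereas without it the rank of $R_{m\times n}^{\Sigma}(M)$ alone cannot recover the symmetry and rank of $N$.
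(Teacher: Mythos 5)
Your forward direction is correct and is exactly the paper's argument: symmetry gives $R_{m\times n}^{\Sigma}(M)=2R_{m\times n}(M)$, and $2$ is invertible. For the converse, your diagnosis is not a gap in your own attempt but a genuine flaw in the statement: since $R_{m\times n}=R_{m\times n}^{(1)}$ is a linear bijection onto the space of \emph{all} $mn\times mn$ matrices, every $N$ arises as $R_{m\times n}(M)$, so your observation that $M\mapsto R_{m\times n}^{\Sigma}(M)=N+N^{T}$ annihilates the skew part of $N$ really does produce counterexamples whenever $mn\geq 2$. Concretely, take $m=1$, $n=2$ and $M=\begin{pmatrix}1&1&-1&0\end{pmatrix}$. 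Then
\begin{equation*}
 R_{1\times 2}(M)=\begin{pmatrix}1&1\\-1&0\end{pmatrix},
 \qquad
 R_{1\times 2}^{\Sigma}(M)=\begin{pmatrix}2&0\\0&0\end{pmatrix},
\end{equation*}
so $R_{1\times 2}^{\Sigma}(M)$ has rank one while $R_{1\times 2}(M)$ is neither symmetric nor of rank one. (Consistently, $M$ is not a Kronecker square: $M=A\otimes A$ with $A=(a_1,a_2)$ would require $a_1a_2=1$ and $a_2a_1=-1$.)

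The step at which the paper's own proof breaks is the assertion that, because $\sum_{j}\left[\vc(A_j)\vc(B_j)^T+\vc(B_j)\vc(A_j)^T\right]$ has rank one, all the $A_j$ and $B_j$ must be scalar multiples of one matrix $A_*$. That conclusion would force $M$ itself to have tensor rank one, which is precisely what cannot be deduced: in the example above the decomposition $M=(1,0)\otimes(1,1)+(0,1)\otimes(-1,0)$ has non-proportional factors, yet the symmetrized sum collapses to rank one through cancellation between the two terms. So you are right that the converse needs the symmetry of $R_{m\times n}(M)$ (equivalently, membership of $M$ in the set $X$ of Lemma~\ref{lem:inj}, the same restriction the paper itself flags in its remark on the cube case) as an additional hypothesis; only the forward implication of the theorem as stated is true.
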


\begin{proof}
 Recall that, for $k=2$, $R_{m\times n}^{(2)}(M)=R_{m\times n}^{(2)}(M)^T=R_{m\times n}(M)$.
 If $R_{m\times n}(M)$ is symmetric and has rank one, then
 \begin{equation*}
  R_{m\times n}^{\Sigma}(M) = R_{m\times n}^{(1)}(M) + R_{m\times n}^{(2)}(M) = 2R_{m\times n}(M)
 \end{equation*}
 which has rank one. For the converse, suppose $R_{m\times n}^{\Sigma}(M)$ has rank one.
 Let
 \begin{equation*}
  M=\sum_{j=1}^p A_j\otimes B_j
 \end{equation*}
 be a tensor decomposition of $M$, where the matrices $A_j$ and $B_j$ are $m\times n$ matrices. Then
 \begin{equation*}
  R_{m\times n}^{\Sigma}(M)=\sum_{j=1}^p \left[\vc(A_j)\vc(B_j)^T + \vc(B_j)\vc(A_j)^T\right].
 \end{equation*}
 Since $R_{m\times n}^{\Sigma}(M)$ has rank one, we find that the matrices $A_j$ and $B_j$ are all scalar
 multiples of the same non-zero matrix $A_*$. Thus
 \begin{equation*}
  R_{m\times n}(M) = \sum_{j=1}^p \vc(A_j)\vc(B_j)^T = c\vc(A_*)\vc(A_*)^T,
 \end{equation*}
 for some scalar $c$. Clearly $c\neq0$, since if $R_{m\times n}(M)$ is zero, then
 $M$ is also zero and consequently $R_{m\times n}^{\Sigma}(M)$ is also zero (i.e.
 does not have rank 1).
 Consequently $R_{m\times n}(M) = c\vc(A_*)\vc(A_*)^T$ is symmetric and has rank one.
\end{proof}

Theorem \ref{thm:o1} part (1) and Theorem \ref{thm:o2} parts (1) and (3) and their proofs
in \cite{ojeda15a} may be trivially reformulated using $R_{m\times n}(M)=\frac12R_{m\times n}^{\Sigma}(M)$
when the underlying field does not have characteristic equal to 2.


\begin{thebibliography}{1}

\bibitem{ojeda15a}
Ignacio Ojeda, \emph{{K}ronecker square roots and the block vec matrix}, The
  American Mathematical Monthly \textbf{122} (2015), no.~1, 60--64.

\bibitem{vanloan93a}
Charles~F. van Loan and Nikos Pitsianis, \emph{Approximation with {K}ronecker
  products}, Linear Algebra for Large Scale and Real-Time Applications (Marc~S.
  Moonen, Gene~H. Golub, and Bart L.~R. De~Moor, eds.), Kluwer Publications,
  1993, pp.~293--314.

\end{thebibliography}
\providecommand{\bysame}{\leavevmode\hbox to3em{\hrulefill}\thinspace}
\providecommand{\MR}{\relax\ifhmode\unskip\space\fi MR }
\providecommand{\MRhref}[2]{%
  \href{http://www.ams.org/mathscinet-getitem?mr=#1}{#2}
}
\providecommand{\href}[2]{#2}

\end{document}